\newtheorem{thm}{Theorem}%[section]
\newtheorem{rem}{Remark}%[section]
\begin{document}
\title[Conformal Positive Mass Theorems for Manifolds with Charge]{
Conformal Positive Mass Theorems for Manifolds with Charge}

\author{Qizhi Wang}
\address{School of Mathematics and Information Science\\ Guangzhou University \\ Guangzhou
510004, China} \email{qzwang@gzhu.edu.cn}

\begin{abstract}
  In this paper, we prove conformal positive mass theorems for asymptotically flat manifolds with charge. We apply conformal relations to show that if the conformal sum of scalar curvature is not less than the norm square of electric field and electric density, the sum of the mass will not less than the modulus of total electric charge. We also study the situation with inner boundary condition and manifolds with scalar charge.
\end{abstract}

\maketitle

\section{Introduction}
Positive mass theorem is a celebrated result in general relativity. It shows that the ADM mass \cite{B} is nonnegative for an asymptotically flat manifold satisfying the dominant energy condition. While conformal transformation is also important for many applications in physics, concerning the sum of the ADM masses of two conformal related metrics one can deduce conformal positive mass theorem.

Conformal positive mass theorem is applied in the context of static uniqueness of blackhole on asymptotically flat spacetimes\cite{M,MS}. In \cite{S}, it is proved by spinorial method, while it is proved by conformal relation of ADM masses in \cite{MS}. Later, some similar conformal positive mass theorems are proved in \cite{Wang, Wang-Tam}.

One common form of this type of theorem is the following.

\begin{thm}\label{1}
Let $(N , h)$ and $(N , h')$ be asymptotically flat Riemannian three-manifolds
with compact interior and finite mass, such that $h$ and $h¡ä$ are $C^{1,1}$ and related via the
conformal rescaling $h' = f^{2}h$ with a $C^{1,1}$ function $f > 0$. Assume further that there
exists a non-negative constant $\beta$ such that the corresponding Ricci scalars satisfy
$S + \beta f^{2}S' \geq 0$ everywhere. Then the corresponding masses satisfy $m + \beta m'\geq 0$.
Moreover, equality holds iff both $(N , h)$ and $(N , h'
)$ are flat Euclidean spaces.
\end{thm}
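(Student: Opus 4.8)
The plan is to fold $h$ and $h'$ into a single asymptotically flat metric with non-negative scalar curvature and then invoke the ordinary positive mass theorem. Concretely, I would pass to the conformal interpolation
$$\hat h := \psi^{4} h, \qquad \psi := f^{\beta/(2(1+\beta))},$$
so that $\hat h = f^{2\beta/(1+\beta)}h$ is the ``conformal weighted geometric mean'' of $h$ and $h' = f^{2}h$, equal to $h$ when $\beta = 0$ and tending to $h'$ as $\beta \to \infty$. Since $h$ and $h'$ are both asymptotically flat, $f \to 1$ at infinity, hence $\psi \to 1$, so $\hat h$ is again an asymptotically flat $C^{1,1}$ metric. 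It then suffices to establish (a) $R_{\hat h} \ge 0$ everywhere, and (b) $m_{\hat h} = \tfrac{1}{1+\beta}(m + \beta m')$; granting these, the positive mass theorem (valid at $C^{1,1}$ regularity) gives $m_{\hat h} \ge 0$, i.e.\ $m + \beta m' \ge 0$.

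For (a) I would apply the three-dimensional conformal scalar-curvature law twice. Writing $w := f^{1/2}$, the relation $h' = w^{4}h$ gives $-8\Delta_h w = w^{5}S' - Sw$, while $\hat h = \psi^{4}h$ gives $R_{\hat h} = \psi^{-5}(-8\Delta_h\psi + S\psi)$. Expanding $\Delta_h\psi$ with $\psi = w^{\beta/(1+\beta)}$ and using the first identity to eliminate $\Delta_h w$ converts the Laplacian contribution into curvature terms which, together with $S\psi$, recombine into a positive multiple of the hypothesis and leave a gradient remainder; explicitly,
$$-8\Delta_h\psi + S\psi = \frac{2\beta}{(1+\beta)^{2}}\,\psi\, f^{-2}\,|\nabla_h f|^{2} + \frac{\psi}{1+\beta}\bigl(S + \beta f^{2}S'\bigr),$$
and both summands on the right are non-negative (the second by assumption), so $R_{\hat h} \ge 0$. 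For (b) I would expand at infinity: the finite-mass hypotheses give $f = 1 + b/r + o(r^{-1})$ with $m' = m + b$, hence $\psi = 1 + \tfrac{\beta b}{2(1+\beta)\,r} + o(r^{-1})$, and the conformal change-of-mass formula yields $m_{\hat h} = m + \tfrac{\beta b}{1+\beta} = \tfrac{1}{1+\beta}(m + \beta m')$.

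For the equality case I would use the rigidity half of the positive mass theorem. If $m + \beta m' = 0$, then $m_{\hat h} = 0$, so $(N,\hat h)$ is isometric to flat Euclidean $\mathbb{R}^{3}$, and in particular $R_{\hat h} \equiv 0$; the displayed identity then forces both non-negative summands to vanish, and when $\beta > 0$ the first forces $\nabla_h f \equiv 0$, so $f \equiv 1$, so $\hat h = h = h'$ is flat. Conversely, if $(N,h)$ and $(N,h')$ are both flat Euclidean space, then $m = m' = 0$. (When $\beta = 0$ the assertion is precisely the classical positive mass theorem, with its rigidity, applied to $(N,h)$.)

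I expect the only genuine obstacle to be identifying the right conformal factor: one wants a power $f^{p}$ for which the second-order terms of $-8\Delta_h\psi + S\psi$ regroup into a positive multiple of $S + \beta f^{2}S'$ while the leftover first-order term carries the favorable sign, and solving the linear ODE $G = \tfrac{1+\beta}{\beta}\,wG'$ for $\psi = G(w)$ pins down $p = \tfrac{\beta}{2(1+\beta)}$; that this same choice also produces the mass $\tfrac{1}{1+\beta}(m + \beta m')$ is what makes the argument close. The remaining points, namely that $\hat h$ is $C^{1,1}$ and asymptotically flat with finite mass and that the positive mass theorem is available at this regularity, are routine given the stated hypotheses.
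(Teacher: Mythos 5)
Your proposal is correct and is essentially the argument this paper relies on: Theorem 1 is quoted from Mars--Simon rather than proved here, and their proof (as well as the paper's own later theorems) proceeds by exactly your conformal interpolation --- the paper's intermediate metric $\overline g=e^{f}g$, its scalar-curvature identity $S_{\overline g}=e^{-f}(\tfrac12 S_g+\tfrac12 e^{2f}S_{g'}+\tfrac14(n-1)(n-2)|\nabla_g f|^2)$, and the relation $m_g+m_{g'}=2m_{\overline g}$ are precisely your $\hat h$, curvature identity, and mass formula at $\beta=1$. Your displayed identity for $-8\Delta_h\psi+S\psi$ and the mass expansion both check out, so the argument closes as you describe.
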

In \cite{MS}, the authors use charge as conformal factors to construct two complete asymptotically flat manifolds both with zero ADM mass. As the scalar curvature may not be nonnegative but their conformal sum is nonnegative, using the rigidity part of Theorem \ref{1}, the metrics are both flat, then they can deduce the conformal factor is a function of radial function, i.e., the metric and the conformal factor are spherically symmetric, thus the uniqueness of static blackhole can be proved.

In this paper, we will change our view on Theorem \ref{1}, we don't put charge as conformal factors, but we consider charge as one part of geometric invariants, and we get some conformal positive mass theorems. The theorems will show that though we don't have dominant energy condition for each manifold, their conformal sum will have some positivity property if some local sum of quantity is positive. Our theorems are applications of positive mass theorem for conformal manifolds, and they provide some interesting properties for ADM mass and charge.

In particular, we apply conformal relations of curvature to show that if the conformal sum of scalar curvature is not less than the norm square of electric field, the sum of the mass will not less than the total electric charge. In section 1, we treat the electric conformal positive mass theorem, and then extend to electromagnetic case in section 2,  we also study the situation with inner boundary condition with spinorial method in section, at last we study manifolds with scalar charge in section 5.

\section{Electric conformal positive mass theorem}
In this section, we will consider asymptotically flat manifolds with electric charge.

In general,  $(M^{n},g)$ is an asymptotically flat manifold \cite{B} means that $M\setminus K$ is diffeomorphic to $\mathbb{R}^{n}\setminus\mathbf{B}_{1}(0)$, where $K\subseteq M$ is compact and
\begin{equation}\label {1}
g_{kl}-\delta_{kl}\in C^{2,\alpha}_{\tau},
\end{equation}
the ADM mass on $(M,g)$ is
\begin{equation}\label {6}
m=\frac{1}{2(n-1)\omega_{n-1}}\lim\limits_{r\rightarrow\infty}\int_{S_{r}}(\partial_{i}g_{ij}-\partial_{j}g_{ii})\nu^{j}_{r}dvol_{S_{r}}.
\end{equation}

First we consider the time symmetric initial data $(M^{3},g,E)$ for Einstein-Maxwell equations where $E$ is a vector field on $M$ corresponding to the electric field. We assume that $M$ is asymptotically flat such that the ADM mass is well defined (with decay order $\tau>\frac{1}{2}$) and at infinity $E$ and $\partial E$ decay as $O(r^{-2})$ and $O(r^{-3})$, here $r=\mid x\mid$ for the chart in the asymptotic end.

We define the total electric charge as $$ Q=\frac{1}{4\pi}\lim_{r\rightarrow\infty}\int_{S_{r}}E\cdot ndS.$$

We assume that the electric charge density is zero which means that $\nabla\cdot E=0$, and the charged dominated condition is $R_{g}\geq2\mid E\mid^{2}$.
By the well-known positive mass theorem with charge\cite{GHH}, if $(M^{3},g,E)$ satisfies the charge dominated condition, then the ADM mass of $M$ should have $m_{g}\geq \mid Q\mid$.

The rigidity part is complicated by topological reason, in general the rigidity holds for a space slice of Majumdar-Papapetrou solutions\cite{KW}. For simplicity we just assume the manifold have only one cylindrical end, then $m_{g}= \mid Q\mid$ holds for manifold arises from an extreme $Reissner-Nordstr\ddot{o}m$ metric. The general rigidity theorem can be pursued similarly.

The general $Reissner-Nordstr\ddot{o}m$ metric is given by :
\begin{equation}
ds^{2}=f(r)^{2}dt^{2}-f(r)^{-2}dr^{2}-r^{2}d\Omega_{n-1}.
\end{equation}
Where $\Omega_{n-2}$ denote standard $(n-2)-$sphere, $f(r)=1-X+Y$ and
\begin{align*}
X=&\frac{2m}{r^{n-3}},\\
Y=&\frac{Q^{2}+P^{2}}{r^{2n-6}}.
\end{align*}
Here $Q$ and $P$ are total electric and magnetic charge respectively, and $m$ is the mass, in the following of this section we set $P=0$.

Since the rigidity only occurs for manifold with inner boundary, we will only consider them in section 5.

\begin{thm}
Let $(M , g)$ and $(M , g')$ be asymptotically flat Riemannian three-manifolds
with compact interior and finite mass, such that $g$ and $g'$ are $C^{2}$ and related via the
conformal rescaling $g' = e^{2f}g$ with a $C^{2}$ function $f > 0$. Let  $\overline{g}=e^{f}g$, the scalar curvature and charge satisfies
$S_{g} + e^{2f}S_{g'} \geq  4\mid E\mid_{g}^{2}$ and $\mid\nabla_{g}f\mid^{2}\geq\mid 3(e^{f})_{i}E^{i}\mid+\mid e^{f}div_{g}E\mid$ everywhere. Then the corresponding masses satisfy $m_{g} + m_{g'} \geq 2\mid Q\mid$.
\end{thm}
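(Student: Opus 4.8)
The plan is to deduce the statement from the positive mass theorem with charge \cite{GHH}, applied to the \emph{geometric mean metric} $\bar g := e^{f}g$, following the conformal strategy behind the classical conformal positive mass theorem. Note that $g' = e^{2f}g = e^{f}\bar g$, so $\bar g$ lies ``halfway'' between $g$ and $g'$; since $g$ and $g'$ are asymptotically flat with finite mass, $f\to 0$ at infinity, $\bar g$ is again asymptotically flat with finite mass, and $\bar E := e^{-3f/2}E$ has the decay required by \cite{GHH}.

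First I would record two conformal identities. Applying the dimension-three transformation law of the scalar curvature to $\bar g = e^{2(f/2)}g$ and to $g' = e^{2f}g$ gives $e^{2f}S_{g'} = S_g - 4\Delta_g f - 2|\nabla_g f|^{2}$ and $\bar S = e^{-f}\bigl(S_g - 2\Delta_g f - \tfrac12|\nabla_g f|^{2}\bigr)$; eliminating $\Delta_g f$ between these yields
\begin{equation}
\bar S \;=\; \tfrac12\, e^{-f}\bigl(S_g + e^{2f}S_{g'} + |\nabla_g f|^{2}\bigr),
\end{equation}
so the hypothesis $S_g + e^{2f}S_{g'}\ge 4|E|_g^{2}$ gives $\bar S \ge 2e^{-f}|E|_g^{2} + \tfrac12 e^{-f}|\nabla_g f|^{2}$, a charged energy bound for $\bar g$ with a nonnegative surplus $\tfrac12 e^{-f}|\nabla_g f|^{2}$ to be spent later. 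Next I would expand the ADM integral \eqref{6} for $\bar g_{kl}=e^{f}g_{kl}$ and for $g'_{kl}=e^{2f}g_{kl}$: with $e^{f}\sim 1+f$ and $g_{kl}\sim\delta_{kl}$ near infinity one finds $\partial_i\bar g_{ij}-\partial_j\bar g_{ii}\sim(\partial_i g_{ij}-\partial_j g_{ii})-2\,\partial_j f$, and the analogous expression for $g'$ with $4\,\partial_j f$ in place of $2\,\partial_j f$, so that $m_{\bar g}=m_g-c\lim_{r}\int_{S_r}\partial_\nu f$ and $m_{g'}=m_g-2c\lim_{r}\int_{S_r}\partial_\nu f$ with a common constant $c$ (the limits existing as all three masses are finite), and hence $m_g+m_{g'}=2\,m_{\bar g}$.

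Then I would check that $(M,\bar g,\bar E)$ obeys the charged dominated condition and invoke \cite{GHH}. The weight $e^{-3f/2}$ is chosen so that the flux of $\bar E$ through $S_r$ computed with $\bar g$ equals the $g$-flux of $E$; hence the total charge of $\bar E$ is again $Q$, and one computes $|\bar E|_{\bar g}^{2}=e^{-2f}|E|_g^{2}$ and $\mathrm{div}_{\bar g}\bar E = e^{-3f/2}\,\mathrm{div}_g E$. In the presence of a charge density the charged dominated condition for $\bar g$ has the form $\bar S \ge 2|\bar E|_{\bar g}^{2} + (\mathrm{const})\,|\mathrm{div}_{\bar g}\bar E|$; substituting the identities above, multiplying by $e^{f}$, and using $S_g + e^{2f}S_{g'}\ge 4|E|_g^{2}$, this reduces — after expressing the charge-density contribution through the divergences of $E$ with respect to both $g$ and $g'$, whose combination produces exactly $3(e^{f})_i E^i$ together with $e^{f}\,\mathrm{div}_g E$ — to the requirement that $2(1-e^{-f})|E|_g^{2}+\tfrac12|\nabla_g f|^{2}$ control $|3(e^{f})_i E^i|+|e^{f}\,\mathrm{div}_g E|$ up to a fixed constant; the first summand is nonnegative because $f>0$, and the rest is exactly what the second hypothesis supplies. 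Then \cite{GHH} gives $m_{\bar g}\ge|Q|$, and with $m_g+m_{g'}=2\,m_{\bar g}$ the theorem follows.

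The conformal identities and the ADM computation are routine. The step I expect to be the main obstacle is the positivity bookkeeping for the charged dominated condition on $\bar g$: carrying \emph{both} the quadratic term $|\bar E|^{2}$ and the charge-density term $\mathrm{div}_{\bar g}\bar E$ through the conformal rescaling and verifying that the surplus $\tfrac12 e^{-f}|\nabla_g f|^{2}$, together with the sign from $f>0$, absorbs the resulting error terms with exactly the constants forced by the precise form of the positive mass theorem with charge. It is this matching that dictates the particular shape of the second hypothesis $|\nabla_g f|^{2}\ge|3(e^{f})_i E^i|+|e^{f}\,\mathrm{div}_g E|$.
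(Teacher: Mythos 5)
Your proposal is correct and follows essentially the same route as the paper: pass to the intermediate metric $\overline{g}=e^{f}g$, use the conformal identity $S_{\overline{g}}=\tfrac12 e^{-f}\bigl(S_{g}+e^{2f}S_{g'}+|\nabla_{g}f|^{2}\bigr)$ so that the surplus $\tfrac12|\nabla_{g}f|^{2}$ together with the second hypothesis yields the charged dominant energy condition for $\overline{g}$, apply the positive mass theorem with charge to get $m_{\overline{g}}\geq|Q|$, and conclude via $m_{g}+m_{g'}=2m_{\overline{g}}$. The only (cosmetic) difference is that you rescale the field to $\bar E=e^{-3f/2}E$ so its divergence transforms homogeneously, whereas the paper keeps $E$ fixed and absorbs the resulting term $\mathrm{div}_{\overline{g}}E=\mathrm{div}_{g}E+\tfrac32 f_{i}E^{i}$ directly with the hypothesis.
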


\begin{proof}
Motivated by the definition of asymptotically flat manifolds, see Corollary 2.1 of \cite{Wang-Tam}, $f=O_{2}(r^{-\tau})$ for $\tau>\frac{1}{2}$.
As $\overline{g}=e^{f}g$, by the conformal relation of scalar curvature\cite{Wang-Tam}, we have for dimension $n$,
\begin{equation}\label{2}
S_{\overline{g}}=e^{-f}(\frac{1}{2}S_{g}+\frac{1}{2}e^{2f}S_{g'}+\frac{1}{4}(n-1)(n-2)\mid\nabla_{g}f\mid^{2}).
\end{equation}
Since
\begin{equation}
2\mid E\mid_{\overline{g}}^{2}=2\overline{g}^{ij}E_{i}E_{j}=2e^{-f}g^{ij}E_{i}E_{j}=2e^{-f}\mid E\mid_{g}^{2},
\end{equation}

Let $n=3$ in (\ref{2}), as we assume $$\mid\nabla_{g}f\mid^{2}\geq\mid 3(e^{f})_{i}E^{i}\mid+\mid e^{f}div_{g}E\mid$$, since by direct calculation $div_{\overline{g}}E=div_{g}E+\frac{3}{2}f_{i}E^{i}$ then we have $$S_{\overline{g}}\geq 2\mid E\mid_{\overline{g}}^{2}+\mid div_{\overline{g}}E\mid$$ on $M$.

So the positive mass theorem with charge is satisfied for the initial data $(M^{3},\overline{g},E)$, also by $f=O_{2}(r^{-\tau})$, $Q_{g}=Q_{\overline{g}}=Q_{g'}$.

Thus we can set $Q=Q_{g}$ and $m_{\overline{g}}\geq\mid Q\mid$.

By the definition of ADM mass\cite{Wang}, we have $m_{g} + m_{g'}=2m_{\overline{g}}$ and then we get $m_{g} + m_{g'} \geq 2\mid Q\mid$.

\end{proof}

\begin{rem}
When the electric density for $\overline{g}$ vanish, the dominant charge condition reduce to $$S_{g} + e^{2f}S_{g'} \geq 4\mid E\mid_{g}^{2}$$,
by \cite{KW} the positive mass theorem can be applied in this dominant charge condition and the nonnegative result still holds.
As is well known, the rigidity of positive mass theorem with electric charge $\mid Q\mid=m\neq0$only occurs for manifold with inner boundary, so our theorem actually shows the rigidity holds only for $\mid Q\mid=m=0$ for complete manifold, and in this case by $\ref{2}$, we get $f=0$, so $(M , g)$ and $(M , g'
)$ are isometric to $\mathbb{R}^{3}$.
\end{rem}

\section{Electromagnetic conformal positive mass theorem}
In this section we extend the electric type theorem to the case with magnetic field.

We consider the time symmetric initial data for the time symmetric initial data $(M^{3},g,E,B)$ for Einstein-Maxwell equations where $B$ is a vector field on $M$ corresponding to the magnetic field. We always assume that $\nabla\cdot B=0$ for any metric and let $B$ and $\partial B$ decay as $O(r^{-2})$ and $O(r^{-3})$ and define the total magnetic charge as $$ P=\frac{1}{4\pi}\lim_{r\rightarrow\infty}\int_{S_{r}}B\cdot ndS.$$.

The charged dominated condition now becomes $S_{g}\geq 2\mid E\mid_{g}^{2}+2\mid B\mid_{g}^{2}+\mid div_{g}E\mid$.
The positive mass theorem will show $$m_{g}\geq \mid Q^{2}+P^{2}\mid^{\frac{1}{2}}$$ with this condition, and equality holds if and only if it arises from extreme $Reissner-Nordstr\ddot{o}m$ metric.

Since it is natural to assume magnetic density vanishes for a chosen metric, we can get by the same reasoning in section 2,
\begin{thm}
Let $(M , g)$ and $(M , g')$ be asymptotically flat Riemannian three-manifolds
with compact interior and finite mass, such that $g$ and $g'$ are $C^{2}$ and related via the
conformal rescaling $g' = e^{2f}g$ with a $C^{2}$ function $f > 0$. Assume the magnetic density for $\overline{g}=e^{f}g$ vanishes, the scalar curvature and charge satisfies $S_{g} + e^{2f}S_{g'} \geq  4\mid E\mid_{g}^{2}+4\mid B\mid_{g}^{2}$ and $\mid\nabla_{g}f\mid^{2}\geq\mid 3(e^{f})_{i}E^{i}\mid+\mid e^{f}div_{g}E\mid$ everywhere. Then the corresponding masses satisfy $m_{g} + m_{g'} \geq 2\mid Q^{2}+P^{2}\mid^{\frac{1}{2}}$.
\end{thm}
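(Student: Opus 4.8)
The proof strategy mirrors exactly the argument used for the electric case in the preceding theorem, so the plan is to reduce the electromagnetic statement to the positive mass theorem with electric and magnetic charge applied to the intermediate metric $\overline{g}=e^{f}g$. First I would record the decay of the conformal factor: since $g'=e^{2f}g$ and both metrics are asymptotically flat with finite mass, Corollary 2.1 of \cite{Wang-Tam} forces $f=O_{2}(r^{-\tau})$ for some $\tau>\frac{1}{2}$, which simultaneously guarantees that $\overline{g}$ is asymptotically flat with well-defined ADM mass and that all three charges agree, $Q_{g}=Q_{\overline{g}}=Q_{g'}$ and $P_{g}=P_{\overline{g}}=P_{g'}$, because the flux integrals defining $Q$ and $P$ are insensitive to a conformal factor decaying faster than $r^{-1/2}$.

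Next I would invoke the conformal scalar curvature identity (\ref{2}) in dimension $n=3$, namely $S_{\overline{g}}=e^{-f}\bigl(\tfrac12 S_{g}+\tfrac12 e^{2f}S_{g'}+\tfrac12\mid\nabla_{g}f\mid^{2}\bigr)$, together with the pointwise conformal transformation of norms, $\mid E\mid_{\overline{g}}^{2}=e^{-f}\mid E\mid_{g}^{2}$ and likewise $\mid B\mid_{\overline{g}}^{2}=e^{-f}\mid B\mid_{g}^{2}$, and the divergence identity $div_{\overline{g}}E=div_{g}E+\tfrac32 f_{i}E^{i}$. Feeding the two hypotheses $S_{g}+e^{2f}S_{g'}\geq 4\mid E\mid_{g}^{2}+4\mid B\mid_{g}^{2}$ and $\mid\nabla_{g}f\mid^{2}\geq\mid 3(e^{f})_{i}E^{i}\mid+\mid e^{f}div_{g}E\mid$ into these identities yields, after multiplying through by $e^{-f}>0$, the charged dominant energy condition for the triple-with-magnetic data, $S_{\overline{g}}\geq 2\mid E\mid_{\overline{g}}^{2}+2\mid B\mid_{\overline{g}}^{2}+\mid div_{\overline{g}}E\mid$, on all of $M$; here the assumed vanishing of the magnetic density for $\overline{g}$ is what lets us leave out a $\mid div_{\overline{g}}B\mid$ term.

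With the charged dominant energy condition verified for $(M^{3},\overline{g},E,B)$, I would apply the positive mass theorem with electric and magnetic charge (the version stated just before the theorem, cf. \cite{GHH,KW}) to conclude $m_{\overline{g}}\geq\mid Q^{2}+P^{2}\mid^{1/2}$. Finally, the additivity of ADM mass under this particular conformal splitting — $m_{g}+m_{g'}=2m_{\overline{g}}$, as established in \cite{Wang} — gives $m_{g}+m_{g'}=2m_{\overline{g}}\geq 2\mid Q^{2}+P^{2}\mid^{1/2}$, which is the claim. The only genuinely delicate point, as in the electric case, is checking that the inequality $\mid\nabla_{g}f\mid^{2}\geq\mid 3(e^{f})_{i}E^{i}\mid+\mid e^{f}div_{g}E\mid$ is exactly strong enough to absorb both the sign-indefinite cross term $\tfrac32 f_{i}E^{i}$ coming from $div_{\overline{g}}E$ and any slack needed to keep $\tfrac12\mid\nabla_{g}f\mid^{2}$ on the right-hand side nonnegative after the curvature substitution; everything else is a bookkeeping exercise in conformal transformation laws, and the magnetic field enters the estimate only through the manifestly nonnegative term $4\mid B\mid_{g}^{2}$, so it poses no additional obstruction.
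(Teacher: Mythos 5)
Your proposal is correct and follows exactly the route the paper intends: the paper gives no separate proof of this theorem, stating only that it follows ``by the same reasoning in section 2,'' and your argument is a faithful expansion of that reasoning --- decay of $f$ and invariance of the charges, the conformal scalar curvature identity and transformation of $\mid E\mid^{2}$, $\mid B\mid^{2}$, and $div\,E$, verification of the charged dominant energy condition for $\overline{g}$, the charged positive mass theorem, and the mass additivity $m_{g}+m_{g'}=2m_{\overline{g}}$. No further comparison is needed.
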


\begin{rem}
If we consider the initial data $(M,g,k,E,B)$ with second fundamental form, we still need to involve the linear momentum in the mass expression. In this situation, the charged dominant condition becomes more complicated\cite{Mc}, generally one needs $$\mu\geq\mid J\mid+2\mid E\mid_{g}^{2}+2\mid B\mid_{g}^{2}+\mid E\times B\mid_{g}.$$Where $(\mu,J)$ is energy-momentum tensor.

One can follow \cite{Wang-Tam} and the theorem above to get spacetime conformal positive mass theorems.
\end{rem}

\section{The inner boundary condition}
We extend the conformal positive mass theorems with inner boundary $\partial M$ in this section. As we assume the manifolds have only one cylindrical end for simplicity, we can let the boundary topologically a sphere.

Since the inner boundary is not necessarily an event horizon and we don't have the general positive mass theorem for manifold with boundary, we need to use spinor field on $M$ to take the boundary into consideration\cite{Wang}.

Let $(M,g)$ be the Riemannian n-dimensional manifold, ${e_{i}}$ the $g$- orthonormal frame, the spinors defined on M is denoted by $\sum M$. In fact, we can set $\sum M=SpinM\times_{\rho}\sum_{n}$ , where SpinM is a $Spin_{n}-principal$ fibre bundle over M and the spinor field $\sum_{n}\cong\mathbb{C}^{2^{[\frac{n}{2}]}}$. We denote the Levi-Civita connection for both the spinor bundle and the tangent bundle by $\nabla$, and the Dirac operator by $\mathfrak{D}=e^{i}\cdot\nabla_{i}$.

As usual, we need to solve the following Dirac system to deduce the mass expression.
\[
 \begin{cases}
\mathfrak{D}\psi&=0\\
P_{-}\psi\mid_{\partial M}&=0\\
\psi-\psi_{0}&\in{W^{2,p}_{-\tau}(\Gamma(\sum M))}
\end{cases}
\]
where $\psi_{0}$ is a constant spinor at infinity, $P_{\pm}$ are the $L^{2}$-orthogonal projections on the spaces of eigenvectors of positive (negative) eigenvalues of $\mathfrak{D}$ on $\partial M$, and
\begin{equation}
\|f\|_{W^{k,p}_{-q}(M)}:=(\int_{M} \sum\limits_{|\alpha|\leq k}(|D^{\alpha}f|\rho^{|\alpha|+q})\rho^{-n}dvol_{g})^{\frac{1}{p}}<\infty.
\end{equation}

In \cite{Wang}, we have
\begin{thm}
Let$(M,g)$ be asymptotically flat $n$ dimensional manifold with inner boundary $\partial M$ which is diffeomorphic to $S^{n-1}$, assume $g^{'}=e^{f}g$ on $M$ is also asymptotically flat. If $e^{f}S_{g^{'}}+S_{g}\geq0$ and $\frac{n-1}{2}H_{g}+\frac{n-1}{8}df(\nu)\leq\lambda_{0}$, then $m(g)+ m(g^{'})\geq0$. Here $\lambda_{0}$ is a lower bound of the first eigenvalue of Dirac operator on the boundary.
\end{thm}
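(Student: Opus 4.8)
The plan is to deduce this two‑metric inequality from an ordinary (single‑metric) positive mass theorem with inner boundary, proved by a Witten‑type spinor argument with an Atiyah--Patodi--Singer boundary condition, applied to the geometric‑mean metric. Set $\overline{g}=e^{f/2}g$, so that $g$, $\overline{g}$, $g'=e^{f}g$ are the three members of the conformal family with conformal exponents $0,\tfrac{f}{2},f$. First I would record, exactly as in Corollary 2.1 of \cite{Wang-Tam}, that simultaneous asymptotic flatness of $g$ and $g'$ forces $f=O_{2}(r^{-\tau})$ with $\tau>\tfrac{n-2}{2}$; hence $\overline{g}$ is asymptotically flat with well‑defined ADM mass, and by the conformal additivity of the ADM mass used in the proof of Theorem 2 one has $m(\overline{g})=\tfrac12\bigl(m(g)+m(g')\bigr)$. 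Thus it is enough to show $m(\overline{g})\geq0$.

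Next I would symmetrize the conformal change of scalar curvature exactly as in $(\ref{2})$, now with the exponent $f$ there replaced by $\tfrac{f}{2}$. Writing $\overline{g}=e^{2\phi}g$ with $\phi=\tfrac{f}{4}$ and recombining the transformation law for $S_{\overline g}$ in terms of $S_g$ with the one in terms of $S_{g'}$, the $\Delta_g f$ terms cancel and one gets
\[
S_{\overline{g}}=e^{-f/2}\Bigl(\tfrac12 S_{g}+\tfrac12 e^{f}S_{g'}+c_{n}\,|\nabla_{g}f|^{2}\Bigr),\qquad c_{n}=\tfrac{(n-1)(n-2)}{16}>0,
\]
so the hypothesis $e^{f}S_{g'}+S_{g}\geq0$ gives $S_{\overline{g}}\geq0$ on $M$. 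I would also transform the boundary data: under $\overline{g}=e^{f/2}g$ the mean curvature of $\partial M$ rescales by $H_{\overline{g}}=e^{-f/4}\bigl(H_{g}+\tfrac{n-1}{4}\,df(\nu)\bigr)$, while the intrinsic boundary Dirac operator and its spectral lower bound rescale by the appropriate conformal weights; the content of the hypothesis $\tfrac{n-1}{2}H_{g}+\tfrac{n-1}{8}\,df(\nu)\leq\lambda_{0}$ is precisely that, after these substitutions, the boundary term of Witten's identity for $\overline{g}$ acquires the favorable sign.

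Then I would run the spinor argument on $(M,\overline{g},\partial M)$. Solve the boundary value problem $\mathfrak{D}_{\overline{g}}\psi=0$, $P_{-}\psi|_{\partial M}=0$, $\psi-\psi_{0}\in W^{2,p}_{-\tau}(\Gamma(\sum M))$: the APS boundary problem is Fredholm, and the a priori estimate needed for solvability for every constant spinor $\psi_{0}$ at infinity follows from $S_{\overline{g}}\geq0$ together with the boundary sign condition (which also makes the kernel trivial). Integrating the Lichnerowicz--Weitzenböck identity over $M$ gives, with $c'_{n}>0$,
\[
\int_{M}\Bigl(|\nabla\psi|^{2}+\tfrac14 S_{\overline{g}}|\psi|^{2}\Bigr)\,dvol_{\overline{g}}=c'_{n}\,m(\overline{g})\,|\psi_{0}|^{2}-\int_{\partial M}\bigl\langle\bigl(\mathfrak{D}_{\partial M}-\tfrac{n-1}{2}H_{\overline{g}}\bigr)\psi,\psi\bigr\rangle\,dvol_{\partial M}.
\]
Since $\partial M\cong S^{n-1}$ and $P_{-}\psi|_{\partial M}=0$ places $\psi|_{\partial M}$ in the span of eigenspinors of $\mathfrak{D}_{\partial M}$ with eigenvalue $\geq\lambda_{0}$, the hypothesis $\tfrac{n-1}{2}H_{\overline{g}}\leq\lambda_{0}$ makes the boundary integral nonnegative; rearranging, $c'_{n}m(\overline{g})|\psi_{0}|^{2}$ is the sum of the nonnegative bulk integral and this nonnegative boundary integral, so (taking $\psi_{0}\neq0$) $m(\overline{g})\geq0$ and hence $m(g)+m(g')\geq0$.

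The main obstacle is the boundary bookkeeping: one has to fix the sign conventions for the outward normal $\nu$, the boundary chirality operator and the mean curvature, verify that $P_{-}\psi|_{\partial M}=0$ really yields $\int_{\partial M}\langle\mathfrak{D}_{\partial M}\psi,\psi\rangle\geq\lambda_{0}\int_{\partial M}|\psi|^{2}$ on the relevant subspace (including the shift by $\tfrac{n-1}{2}H$), and track the conformal weights of the boundary Dirac operator and of $\lambda_{0}$ carefully enough that the constants $\tfrac{n-1}{2}$ and $\tfrac{n-1}{8}$ in the hypothesis emerge exactly; the interior analysis — solvability of the Dirac boundary value problem and the identification of the term at infinity with a positive multiple of $m(\overline{g})$ — is by now routine once $S_{\overline{g}}\geq0$ is in hand.
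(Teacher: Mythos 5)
Your interior reduction is sound and matches the mechanism the paper itself uses elsewhere: with $\overline{g}=e^{f/2}g$ one does get $m(\overline{g})=\tfrac12(m(g)+m(g'))$ from the linearity of the ADM correction term, and the symmetrized conformal law with $c_{n}=\tfrac{(n-1)(n-2)}{16}$ does give $S_{\overline{g}}\geq0$ from $e^{f}S_{g'}+S_{g}\geq0$; this is exactly the route of Theorem 2 and of (\ref{2}). The genuine gap is at the boundary. Your plan needs the single-metric hypothesis $\tfrac{n-1}{2}H_{\overline{g}}\leq\lambda_{1}^{+}(\mathfrak{D}^{\partial}_{\overline{g}})$ for the metric $\overline{g}$, but the theorem's hypothesis is stated entirely in the $g$-geometry: $\lambda_{0}$ bounds the first eigenvalue of the boundary Dirac operator of $(\partial M, g|_{\partial M})$. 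The first eigenvalue of the Dirac operator is \emph{not} conformally covariant under the non-constant rescaling $\overline{g}|_{\partial M}=e^{f/2}g|_{\partial M}$ — there is no pointwise weight by which "$\lambda_{0}$ rescales", and the min--max comparison between $\lambda_{1}^{+}(\mathfrak{D}^{\partial}_{\overline{g}})$ and $e^{-f/4}\lambda_{0}$ goes in no particular direction without extra control on $f|_{\partial M}$. For the same reason the APS projections $P_{\pm}$ for $g$ and for $\overline{g}$ are projections onto different spectral subspaces, so even the boundary condition you impose is not the one in the statement. Your own transformation $H_{\overline{g}}=e^{-f/4}(H_{g}+\tfrac{n-1}{4}df(\nu))$ already signals the problem: with that (trace) convention, $\tfrac{n-1}{2}H_{\overline{g}}$ carries the coefficient $\tfrac{(n-1)^{2}}{8}$ on $df(\nu)$, not the $\tfrac{n-1}{8}$ of the hypothesis.

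The paper avoids all of this by never conformally changing the boundary. The Witten identity is run on $(M,g)$ itself, with the conformal factor entering only through the auxiliary divergence identity (\ref{14}): integrating $-\nabla_{i}\bigl(\tfrac{2}{n-2}e^{-\frac{(n-2)f}{4}}(\nabla^{i}e^{\frac{(n-2)f}{4}})|\psi|^{2}\bigr)=-\nabla_{i}(\tfrac12\nabla^{i}f\,|\psi|^{2})$ produces precisely the boundary flux $\tfrac{n-1}{8}df(\nu)$ appearing next to $\tfrac{n-1}{2}H_{g}$ in (\ref{29}), while the mass sum enters through the difference formula (\ref{28}). In that identity the boundary operator $\mathfrak{D}^{\partial}$, the mean curvature, the volume element and hence $\lambda_{0}$ all refer to $g$, so the stated hypothesis applies verbatim. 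If you want to salvage your geometric-mean route, you must either add a hypothesis relating $\lambda_{1}^{+}(\mathfrak{D}^{\partial}_{\overline{g}})$ to $\lambda_{0}$ (e.g. via a conformal eigenvalue estimate on $\partial M$), or abandon the reduction at the boundary and splice in the divergence-identity bookkeeping as the paper does.
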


\begin{rem}
We can take the lower bound \cite{Hi} as
\begin{equation}
\lambda\geq\frac{1}{2}\sqrt{\frac{n-1}{n-2}inf S\mid_{\partial M}}.
\end{equation}
and so we can denote
\begin{equation}
\lambda_{0}:=\frac{1}{2}\sqrt{\frac{n-1}{n-2}inf S\mid_{\partial M}}.
\end{equation}
\end{rem}

By \cite{Wang-Tam} we get
\begin{equation}\label {14}
 \begin{split}
 & -\nabla_{i}(\frac{2}{n-2}e^{-\frac{(n-2)f}{4}}(\nabla^{i}e^{\frac{(n-2)f}{4}})\mid \psi\mid^{2})\\
 =& \frac{2}{n-2}(\nabla_{i}e^{\frac{(n-2)f}{4}})e^{-\frac{(n-2)f}{2}}(\nabla^{i}e^{\frac{(n-2)f}{4}})\mid \psi\mid^{2}-\frac{2}{n-2}e^{-\frac{(n-2)f}{4}}(\nabla_{i}\circ\nabla^{i}e^{\frac{(n-2)f}{4}})\mid \psi\mid^{2}\\ & -\frac{2}{n-2}e^{-\frac{(n-2)f}{4}}(\nabla^{i}e^{\frac{(n-2)f}{4}})(\langle\psi,\nabla_{i}\psi\rangle+\langle\nabla_{i}\psi,\psi\rangle)\\
 =& \frac{2}{n-2}\mid\nabla_{i}\psi-e^{-\frac{(n-2)f}{4}}(\nabla_{i}e^{\frac{(n-2)f}{4}}) \psi\mid^{2}\\ &-\frac{2}{n-2}\mid\nabla_{i}\psi\mid^{2}+\frac{2}{n-2}e^{-\frac{(n-2)f}{4}}(\Delta_{g}e^{\frac{(n-2)f}{4}})\mid \psi\mid^{2}.
 \end{split}
\end{equation}

and the expression of $m(g^{'})-m(g)$:
\begin{equation}\label {28}
\begin{split}
m(g^{'})-m(g)=&\frac{1}{2\omega_{n-1}}\int_{\partial M}e^{\frac{(n-2)f}{4}}df(\nu_{r})dvol_{g}\partial M \\ &+\frac{1}{2(n-1)\omega_{n-1}}\int_{M}e^{\frac{(n-2)f}{4}}(e^{f}S_{g^{'}}-S_{g})dvol_{g}.
\end{split}
\end{equation}

By the above two relations, the sum of mass and the solvability of the Dirac system is due to the nonnegativity of the following calculation for spinor field $\psi$:

\begin{equation}\label {29}
\begin{split}
&\int_{S_{\infty}}\langle\psi,\frac{4}{n-1}\sigma_{ij}\nabla_{j}\psi-\frac{2}{n-2}e^{-\frac{(n-2)f}{4}}(\nabla_{i}e^{\frac{(n-2)f}{4}})\psi\rangle_{g}*_{g}e^{i}\\
&=\int_{M}(\frac{2}{n-2}\mid\nabla_{i}\psi+e^{-\frac{(n-2)f}{8}}(\nabla_{i}e^{\frac{(n-2)f}{4}}) \psi\mid^{2})dvol_{g}\\
 &+\frac{4}{n-1}\int_{\partial M}(\mathfrak{D}^{\partial}(\pi_{\ast}\psi)-(\frac{n-1}{2}H_{g}+\frac{n-1}{8}df(\nu))(\pi_{\ast}\psi),\pi_{\ast}\psi)dvol_{g}\partial M\\
 &+ \int_{M}((\frac{1}{2n-2}e^{f}S_{g^{'}}+\frac{1}{2n-2}S_{g})\mid\psi\mid^{2}\\
 &+(\frac{2n-6}{(n-1)(n-2)})\mid\nabla\psi\mid^{2}-\frac{4}{n-1}\mid\mathfrak{D}\psi\mid^{2})dvol_{g}.
\end{split}
\end{equation}
where $\sigma_{ij}=\delta_{ij}+e_{i}\cdot e_{j}\cdot$ and $\nu$ is the unit normal points toward inside of $M$, $D^{\partial}$ is Dirac operator on the inner boundary $\partial M$.

Using similar calculation, we get the following theorem:

\begin{thm}
Let $(M,g)$ be an asymptotically flat three dimensional manifold with inner boundary $\partial M$ which is diffeomorphic to $S^{2}$, assume $g^{'}=e^{f}g$ on $M$ is also asymptotically flat. If $\frac{1}{2}e^{f}S_{g^{'}}+\frac{1}{2}S_{g}-\mid E\mid^{2}_{g}-\mid div_{g}E\mid\geq0$ and $H_{g}+\frac{1}{4}df(\nu)+\mid g(E,\nu)\mid\leq\lambda_{0}$, then $m(g)+ m(g^{'})\geq2\mid Q\mid$. Here $\lambda_{0}$ is a lower bound of the first eigenvalue of Dirac operator on the boundary. Moreover, if the manifold has only one cylindrical end then equality holds iff both $(M , g)$ and $(M , g'
)$ are spatial extreme $Reissner-Nordstr\ddot{o}m$ metric with $m=\mid Q\mid$ outside the horizon.
\end{thm}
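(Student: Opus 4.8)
The plan is to run a charged Witten--Herzlich argument on $(M,g)$, adapting the conformal spinorial identity \eqref{29} so that the electric field is absorbed into a modified spinor connection, in the spirit of Gibbons--Hawking--Hull \cite{GHH}. First I would record, exactly as in the proof of Theorem 2, that the asymptotic normalisation forces $f=O_{2}(r^{-\tau})$ with $\tau>\tfrac12$, so that $m(g)$ and $m(g')$ are well defined and all the charges coincide, $Q_{g}=Q_{g'}=Q$. On the spinor bundle $\Sigma M$ of $(M,g)$ I would then introduce the $f$-twisted and charge-twisted connection
\[
\widehat\nabla_{i}\psi=\nabla_{i}\psi+e^{-\frac{f}{8}}\bigl(\nabla_{i}e^{\frac{f}{4}}\bigr)\psi+\Theta_{i}(E)\psi,
\]
where $\Theta(E)$ is the standard Clifford-algebraic electric term (in three dimensions one encodes $E$ using the quaternionic structure on $\Sigma M$, as in the charged positive mass theorem), with associated Dirac--Witten operator $\widehat{\mathfrak D}$. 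The heart of the proof is the charged refinement of \eqref{14}--\eqref{29}: for $\psi$ with $\widehat{\mathfrak D}\psi=0$, $P_{-}\psi|_{\partial M}=0$ and $\psi-\psi_{0}\in W^{2,p}_{-\tau}$, one should obtain
\[
c\,\bigl(m(g)+m(g')-2|Q|\bigr)=\int_{M}\Bigl(2|\widehat\nabla_{i}\psi|^{2}+\bigl(\tfrac12 e^{f}S_{g'}+\tfrac12 S_{g}-|E|^{2}_{g}-|div_{g}E|\bigr)|\psi|^{2}\Bigr)dvol_{g}+c'\,\mathcal I_{\partial M},
\]
with $c,c'>0$ and $\mathcal I_{\partial M}=\int_{\partial M}\bigl(\mathfrak D^{\partial}(\pi_{*}\psi)-(H_{g}+\tfrac14 df(\nu)+g(E,\nu))(\pi_{*}\psi),\pi_{*}\psi\bigr)dvol_{g}\partial M$, all normalising constants absorbed into $c,c'$.

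Establishing this identity is the step I expect to be the main obstacle. One must choose $\Theta(E)$ so that simultaneously (i) the Witten boundary integral at infinity acquires precisely the extra term $-2|Q|$ beyond $m(g)+m(g')$; (ii) the interior Weitzenb\"ock remainder, after completing the square to form $|\widehat\nabla_{i}\psi|^{2}$ --- in particular absorbing the $\nabla f$--$E$ cross terms --- collapses to exactly the combination in the first hypothesis, with the \emph{correct} divergence $div_{g}E$, which is the reason no auxiliary inequality on $|\nabla_{g}f|^{2}$ is needed here (in contrast with Theorem 2, the gradient term is controlled by the favourable square $|\widehat\nabla\psi|^{2}$); and (iii) the inner boundary term takes the stated form with the extra $g(E,\nu)$. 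Since the coefficient $2n-6$ appearing in \eqref{29} vanishes for $n=3$, no residual $|\nabla\psi|^{2}$ term survives. Granting the identity, the argument closes as in \cite{Wang}: the chirality condition makes $\widehat{\mathfrak D}$ with this boundary condition self-adjoint and Fredholm of index zero, and solvability of the boundary value problem follows from the coercivity estimate obtained by applying the identity to $\psi_{0}$ a unit constant spinor, provided the right-hand side is nonnegative. The interior integrand is nonnegative by the first hypothesis, and the boundary integrand is nonnegative because $H_{g}+\tfrac14 df(\nu)+|g(E,\nu)|\le\lambda_{0}\le\lambda_{1}(\mathfrak D^{\partial})$, using the Hijazi bound $\lambda_{1}(\mathfrak D^{\partial})\ge\tfrac12\sqrt{\tfrac{n-1}{n-2}\inf S|_{\partial M}}=\lambda_{0}$ from \cite{Hi} together with $|(g(E,\nu)\pi_{*}\psi,\pi_{*}\psi)|\le|g(E,\nu)|\,|\pi_{*}\psi|^{2}$. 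Inserting the resulting solution $\psi$ into the identity makes every term on the right nonnegative, so $m(g)+m(g')\ge2|Q|$.

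For the rigidity statement, suppose $m(g)+m(g')=2|Q|$. Then for the solution $\psi$ all three nonnegative contributions vanish: $\widehat\nabla\psi\equiv0$; the interior scalar weight vanishes on $\{\psi\ne0\}$, hence everywhere, since a nontrivial solution of $\widehat\nabla\psi=0$ has no zeros; and $\pi_{*}\psi$ is an eigenspinor of $\mathfrak D^{\partial}$ for the eigenvalue $\lambda_{0}$ realising equality in the Hijazi estimate, which forces $\partial M$ to be a round $S^{2}$ with constant scalar curvature and $H_{g}+\tfrac14 df(\nu)+|g(E,\nu)|\equiv\lambda_{0}$. The equation $\widehat\nabla\psi=0$ is an equation of imaginary-Killing type coupling $g$, $f$ and $E$; its integrability conditions, together with the assumption of a single cylindrical end, should pin $(M,g)$ down to the spatial slice of an extreme Reissner--Nordstr\"om metric with $m(g)=|Q|$ outside the horizon, and, the $df$-terms in $\widehat\nabla$ being then determined, $f$ to be a function of the radial coordinate for which $g'=e^{f}g$ is again spatial extreme Reissner--Nordstr\"om; conversely these metrics plainly saturate the inequality. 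Checking that the curvature constraints read off from $\widehat\nabla\psi=0$ genuinely single out extreme Reissner--Nordstr\"om is the computational core of this part, and parallels the classification in \cite{KW}.
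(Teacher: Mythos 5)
Your proposal follows essentially the same route as the paper: a charged Witten--Herzlich argument in which the conformal spinorial identity \eqref{29} is modified by the charge-twisted connection $D_{i}=\nabla_{i}+\tfrac12 E\cdot e_{i}\cdot$, the boundary integral at infinity is identified with $4\pi(m(g)+m(g')-2|Q|)$, and the two hypotheses make the interior and inner-boundary terms nonnegative, with the $2n-6$ coefficient killing the residual $|\nabla\psi|^{2}$ term at $n=3$ exactly as you note. The only divergence is in the rigidity step, where the paper argues more directly: equality forces $\nabla f=0$ in the completed square, hence $f\equiv 0$ and $g=g'$, after which the classification is delegated to the rigidity of the charged positive mass theorem of \cite{GHH}, \cite{KW}, rather than being extracted from the integrability conditions of the twisted Killing spinor equation as you propose.
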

\begin{proof}

As in \cite{Wang} and (\ref{29}), we use a boundary term to relate $2m_{g}-2\mid Q\mid$ and $m_{g'}-m_{g}$ respectively in the left of the identity, and then express them by Stokes theorem. But notice that we have to introduce another connection $D$ to let the electric field come into the calculation, although with different connection, the spinor bundle are isomorphic and we can use the same symbol $\psi$ to represent a spinor field.

To make sense for the time symmetric initial data with electric charge, now we set $D_{i}=\nabla_{i}+A_{i}$ where $A_{i}=\frac{1}{2}E\cdot e_{i}\cdot$, and replace the $\nabla$ with $D$ in (\ref{29}) in the first term on the left of (\ref{29}), and note that the Dirac operator in this proof is related to the new connection $D$,  we can get

\begin{equation}\label {30}
\begin{split}
&\int_{S_{\infty}}\langle\psi,\frac{4}{n-1}\sigma_{ij}D_{j}\psi-\frac{2}{n-2}e^{-\frac{(n-2)f}{4}}(\nabla_{i}e^{\frac{(n-2)f}{4}})\psi\rangle_{g}*_{g}e^{i}\\
&=\int_{M}(\frac{2}{n-2}\mid \nabla_{i}\psi+e^{-\frac{(n-2)f}{8}}(\nabla_{i}e^{\frac{(n-2)f}{4}}) \psi\mid^{2})dvol_{g}\\
 &+\frac{4}{n-1}\int_{\partial M}(\mathfrak{D}^{\partial}(\pi_{\ast}\psi)-(\frac{n-1}{2}H_{g}+\frac{n-1}{8}df(\nu)+g(E,\nu))(\pi_{\ast}\psi),\pi_{\ast}\psi)dvol_{g}\partial M\\
 &+ \int_{M}((\frac{1}{2n-2}e^{f}S_{g^{'}}+\frac{1}{2n-2}S_{g}-\frac{1}{n-1}\mid E\mid^{2}_{g}-\frac{1}{n-1}div_{g}E)\mid\psi\mid^{2}\\
 &+(\frac{2n-6}{(n-1)(n-2)})\mid \nabla\psi\mid^{2}-\frac{4}{n-1}\mid\mathfrak{D}\psi\mid^{2})dvol_{g}.
\end{split}
\end{equation}

here we have used the following Lichnerowicz identity, the calculation of this identity can be deduced by the calculation (\ref{29}) and note we use the Bochner formula on the right the spinor field with connection $D$ (see\cite{KW}) :
$$(\mid \nabla\psi\mid^{2}+\frac{1}{4}\mathfrak{R}\mid\psi\mid^{2}-\mid\mathfrak{D}\psi\mid^{2})\ast1=d(\langle\psi,\sigma_{ij}\cdot D_{j}\psi\rangle\ast e_{i}).$$
Where $\mathfrak{R}=S_{g}-2\mid E\mid_{g}^{2}-div_{g}E,$ and the right side of this identity can be integrated to be mass minus the charge.

Let $n=3$, suppose that $\frac{1}{2}e^{f}S_{g^{'}}+\frac{1}{2}S_{g}-\mid E\mid^{2}_{g}-\mid div_{g}E\mid\geq0$ and the boundary term of (\ref{30}) nonnegative, by the asymptotic condition we see that $\mathfrak{D}$
is an isomorphism between weighted Sobolev spaces with Atiyah-Patodi-Singer boundary condition, thus the Dirac system can be solved \cite{H}.

Let $\psi$ be the solution of the Dirac system, we put it in  (\ref{30}), when $n=3$ and $\mid\psi_{0}\mid^{2}=1$ the left handside of (\ref{30}) becomes
$$8\pi (m_{g}-\mid Q\mid)+4\pi(m_{g'}-m_{g})=4\pi(m_{g}+m_{g'}-2\mid Q\mid)\geq0.$$

When equality holds, $(\ref{30})$ shows $\nabla f=0$, since $f=0$ at infinity, we get $f=0$ on $M$, thus $g$ and $g'$ are isometric, and our assumptions meet the requirement of positive mass theorem with charge, thus the rigidity follows by positive mass theorem with electric charge\cite{GHH}.

\end{proof}
\begin{rem}
We explain how the $\mid Q\mid$ come in the expression. Notice that we can assume the charge $Q$ be nonnegative first in the proof, if not, we can replace $E$ with $-E$, and the proof goes through.
\end{rem}
\section{Scalar field}     % ...place a normal section heading before the bibliography entries.
Einstein-Scalar field system is one of the simplest non-vacuum system which arises in coupling gravity to a scalar field. The general scalar field $\phi$ satisfies $\triangle_{g}\phi=V'(\phi)$ for some $V(\phi)$. In this section we take the conformal scalar hair for static blackhole into consideration.

The energy-momentum tensor\cite{T} for conformal scalar field is $$T_{\mu\nu}=\nabla_{\mu}\phi\nabla_{\nu}\phi-\frac{1}{12}g_{\mu\nu}\mid\nabla\phi\mid^{2}-\frac{1}{6}(g_{\mu\nu}\nabla^{2}\nabla_{\mu}\nabla_{\nu}+G_{\mu\nu})\phi^{2}$$
Here $G_{\mu\nu}$ is the Einstein tensor and the scalar field $\phi=O(r^{-1})$ on a static spacetime $M\times\mathbb{R}$.

Since we don't have positive mass theorem such that we can relate ADM mass with the scalar field, we cannot get analogue of conformal positive mass theorems.
As the spacetime is static, $\phi$ is independent of time, thus by the constraint equations we can get the dominant energy condition as follows:
\begin{equation}
T_{00}=\frac{1}{6}S_{g}\phi^{2}-\frac{1}{4}\mid\nabla\phi\mid^{2}-\frac{1}{3}\phi\Delta_{g}\phi\geq0.
\end{equation}

Let $g'=\phi^{4}g$, by the conformal relation $S_{g'}=\phi^{-5}(-8)\Delta_{g}\phi+S_{g}\phi$, we get
\begin{equation}\label{c}
24T_{00}=3S_{g}\phi^{2}+S_{g'}\phi^{6}-6\mid\nabla\phi\mid^{2}\geq0.
\end{equation}

Since the dominant energy condition for manifold $(M,g)$ is $T_{00}=S_{g}\geq0$, and this is equivalent to $S_{g}\phi^{2}+ \frac{1}{3}S_{g'}\phi^{6}\geq2\mid\nabla\phi\mid^{2}$ by $(\ref{c})$. We get the weak positive mass theorem below:

\begin{thm}
Let $(M,g)$ be an asymptotically flat three dimensional manifold with conformal scalar field $\phi$, assume $g^{'}=\phi^{4}g$. If $S_{g}+ \frac{1}{3}S_{g'}\phi^{4}\geq2\phi^{-2}\mid\nabla\phi\mid^{2}$, then the ADM mass of $(M,g)$ is nonnegative. If mass vanishes, the metric must be flat without scalar field.
\end{thm}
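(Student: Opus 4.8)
The plan is to recognise the curvature inequality as a disguised dominant energy condition for the time--symmetric Einstein--conformal-scalar data $(M,g,\phi)$ and then to apply the classical positive mass theorem to $(M,g)$ alone. The rescaled metric $g'=\phi^{4}g$ enters only through the conformal transformation law of scalar curvature, so it serves purely as a bookkeeping device and need not be assumed asymptotically flat; since $(M,g)$ is asymptotically flat and, on a static spacetime, $\phi=O(r^{-1})$ with $\partial\phi$ one order better, the ADM mass $m_{g}$ is well defined.

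First I would rewrite the hypothesis: multiplying $S_{g}+\tfrac13 S_{g'}\phi^{4}\ge 2\phi^{-2}|\nabla\phi|^{2}$ by $\phi^{2}>0$ and substituting the conformal relation $S_{g'}\phi^{5}=-8\Delta_{g}\phi+S_{g}\phi$ — equivalently, reading off identity $(\ref{c})$ directly — shows the hypothesis is exactly $24\,T_{00}\ge 0$, i.e. the energy density of the system is non-negative. As the data is time symmetric the momentum density vanishes, so this is precisely the dominant energy condition. Once the self-reference of the improved, conformally covariant stress tensor is resolved, the Hamiltonian constraint on the time--symmetric slice, $\tfrac12 N^{2}S_{g}=G_{00}=8\pi T_{00}$ (Gauss equation together with Einstein's equation, $N$ the lapse), relates $S_{g}$ to $T_{00}$ by an everywhere positive factor, so $T_{00}\ge0$ forces $S_{g}\ge0$ on $M$. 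The Schoen--Yau / Witten positive mass theorem then yields $m_{g}\ge0$.

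For the rigidity statement, $m_{g}=0$ forces $(M,g)$ to be isometric to Euclidean $\mathbb{R}^{3}$, so $S_{g}\equiv0$ and hence, by the constraint, $T_{00}\equiv0$. Feeding $S_{g}\equiv0$ back into $(\ref{c})$ and using $S_{g'}\phi^{5}=-8\Delta_{g}\phi$ gives $\phi\,\Delta_{g}\phi=-\tfrac34|\nabla\phi|^{2}$, which is the same as $\Delta_{g}(\phi^{7/4})=0$. A bounded harmonic function on $\mathbb{R}^{3}$ that tends to zero at infinity vanishes identically, so $\phi\equiv0$: the flat metric carries no scalar field.

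The delicate point is the non--minimal $R\phi^{2}$ coupling, which is exactly what makes $(\ref{c})$ couple $S_{g}$ to $S_{g'}$ in the first place: because the improved stress tensor contains $G_{\mu\nu}$ itself, Einstein's equation must be solved for $G_{\mu\nu}$ before the constraint can be read off, and one must check that the resulting coefficient in front of $S_{g}$ (schematically $(1+\tfrac{4\pi}{3}\phi^{2})^{-1}$) never degenerates, which uses the finiteness of $\phi$ on the complete manifold $M$. Should this bookkeeping prove awkward, an alternative is to run a Witten--type spinorial argument on $(M,g)$ with the scalar-field terms folded into a modified Lichnerowicz identity, in the spirit of the electric-field computation in Section 4, obtaining $m_{g}\ge0$ directly from $T_{00}\ge0$.
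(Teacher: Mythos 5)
Your proposal is correct and follows essentially the same route as the paper: you recognise the hypothesis (after multiplying by $\phi^{2}$) as the dominant energy condition $T_{00}\geq 0$ via identity $(\ref{c})$, pass to $S_{g}\geq 0$ through the Hamiltonian constraint, invoke the classical positive mass theorem for $(M,g)$ alone, and derive rigidity from $S_{g}\equiv 0$ together with the conformal relation $S_{g'}\phi^{5}=-8\Delta_{g}\phi$. The only (harmless) divergence is in the rigidity endgame, where the paper integrates $-8\phi\Delta_{g}\phi=6|\nabla\phi|^{2}$ over $M$ and applies the divergence theorem to get $2\int_{M}|\nabla\phi|^{2}=0$, rather than your Liouville argument for $\Delta_{g}(\phi^{7/4})=0$; the paper's version avoids any regularity issue of $\phi^{7/4}$ at zeros of $\phi$, while your explicit attention to the non-minimal $G_{\mu\nu}\phi^{2}$ term in the improved stress tensor is more careful than the paper's terse assertion that $T_{00}\geq0$ amounts to $S_{g}\geq0$.
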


In fact we can get more after integration of (\ref{c}) over $M$ , it is seen that if $S_{g}=0$ then $\nabla\phi=0$ by divergence theorem, as
$\phi=O(r^{-1})$, we can see that $\phi=0$. Thus scalar flat manifold does not arise from static spacetime with conformal scalar field.

\section*{\textbf{Acknowledgments}}
The author would like to thank professor Xie Naqing for his encouragement and support.

\end{document}